\begin{document}
\title{A Fully Adaptive Frank-Wolfe Algorithm for Relatively Smooth Problems and Its Application to Centralized Distributed Optimization.}
%
%
\author{A.A. Vyguzov\inst{1,3,4}\orcidID{0009-0005-1681-1750} \and
	F.S. Stonyakin \inst{1,2,3}\orcidID{0000-0002-9250-4438}}
\authorrunning{A. Vyguzov et al.}
%
\institute{Moscow Institute of Physics and Technology, Dolgoprudny, Institutsky lane, 9, Russia \and
	Simferopol, Academician Vernadsky Avenue, 4, V. I. Vernadsky Crimean Federal University, Republic of Crimea, Russia \and Innopolis University, Kazan, Tatarstan, 420500 Russia \and Adyghe State University, Maikop, Russia, Pervomayskaya, 208}

\maketitle              
\begin{abstract}
	We study the Frank-Wolfe algorithm for constrained optimization problems with relatively smooth objectives. Building upon our previous work, we propose a fully adaptive variant of the Frank-Wolfe method that dynamically adjusts the step size. Our method does not require prior knowledge of the function parameters and guarantees convergence using only local information. We establish a linear convergence rate under relative strong convexity and provide a detailed theoretical analysis of the proposed adaptive step-size rule.
	
	Furthermore, we demonstrate how relative smoothness and strong convexity naturally arise in the setting of centralized distributed optimization. Under a variance-type assumption on the gradients, we show that the global objective becomes relatively strongly convex with respect to the Bregman divergence generated by a local function. This structure allows us to apply our adaptive Frank-Wolfe algorithm, leading to provable acceleration due to an improved relative condition number.
	
	\keywords{frank-wolfe algorithm \and relative-smoothness \and convex optimization \and distributed optimization \and backtracking line-search.}
	
\end{abstract}

\section{Introduction}

We consider the following constrained optimization problem:

\begin{align}\label{main_problem}
	\min_{x \in Q} \quad  f(x),
\end{align}

where $Q$ is a convex and compact subset of $\mathbb{R}^n$, and $f: Q \to \mathbb{R}$ is a convex function. We denote by $x^*$ and $f^*$ the solution and optimal value of problem \eqref{main_problem}, respectively.

We assume that $f$ in \eqref{main_problem} is relatively smooth (\cite{bauschke2017descent}, \cite{lu2018relatively}) with smoothness constant $L > 0$, meaning it satisfies the following condition:
\begin{equation}\label{def:rs}
	f(x) \leq f(y) + \langle \nabla f(y), x - y\rangle + L V(x, y), \quad \forall x, y \in Q,
\end{equation}
where $V(x, y)$ is the Bregman divergence between $x$ and $y$, defined as \( V(x, y) = h(x) - h(y) - \langle \nabla h(y), x - y \rangle \). Here, $h$ is a convex (not necessarily strongly convex) reference function.

Furthermore, we assume that the divergence function \( V(x, y) \) in \eqref{def:rs} satisfies the Triangle Scaling Exponent (TSE) property with some constant \( \gamma \in (1,2] \), as introduced in \cite{hanzely2021accelerated}:  
\begin{equation}\label{def:tse}
	V((1 - \theta)x + \theta z, (1 - \theta)x + \theta \tilde{z}) \leq \theta^\gamma V(z, \tilde{z}), \quad \forall \theta \in [0,1], \quad \forall x, z, \tilde{z} \in Q.
\end{equation}

The TSE property plays a key role in the convergence analysis of optimization algorithms under relative smoothness. The value of \( \gamma \) depends on the choice of divergence. For instance, for the Euclidean distance (a special case of Bregman divergence) or the Kullback–Leibler divergence, we have \( \gamma = 1 \). For other divergences, \( \gamma \) may not be explicitly known. Notably, for the Itakura–Saito (IS) divergence, it is known that \( \gamma > 0.5 \), although the exact value remains unspecified.

In this paper, we continue the development of the Frank-Wolfe method for relatively smooth objectives, initiated in \cite{vyguzov2024adaptivevariantfrankwolfemethod}. We propose a modified adaptive step size for the Bregman setting, which depends not only on the parameter $L$ from \eqref{def:rs}, but also on the TSE exponent $\gamma$ from \eqref{def:tse}:
\begin{equation}\label{step_size_l_smooth}
	\alpha_k := \min \left\{ \left( \frac{- \langle \nabla f(x_k), d_k \rangle }{2 L_k V(s_k, x_k)} \right) ^{\frac{1}{\gamma_k - 1}}, 1 \right\}.
\end{equation}

The Frank-Wolfe update step is given by:
\begin{equation}\label{FW_step_size}
	x_{k+1} = x_k + \alpha_k d_k,
\end{equation}
where $\mathrm{LMO}_Q(g) = \arg\min_{z \in Q} g^\top z$ is the linear minimization oracle (LMO), and $d_k = s_k - x_k$ is the descent direction, with $s_k \in \mathrm{LMO}_Q(\nabla f(x_k))$.

A similar idea for adapting the step size was explored in \cite{takahashi2025fast} and \cite{pedregosa2020linearly}, but our condition for adaptation and the corresponding procedure differ, which leads to distinct convergence behavior and analysis.

We establish a linear convergence rate for the proposed method under relative strong convexity. This analysis has not been presented in prior works. 

\begin{definition}
	A function $f$ is said to be \emph{relatively strongly convex} if it is differentiable on $Q$ and there exists $\mu > 0$ such that the following condition holds:
	\begin{equation}\label{rel_strong_conv}
		f(x) \geq f(y) + \langle \nabla f(y), x - y\rangle + \mu L V(x, y), \quad \forall x, y \in Q.
	\end{equation}
\end{definition}

The Frank-Wolfe method is well-suited for distributed optimization problems due to its simpler implementation compared to gradient-based methods. Constrained problems over structured domains also naturally occur in many applications; see, for instance, the survey \cite{ghojogh2021kkt}.

Our result enables the application of the method to centralized distributed optimization problems. Specifically, we consider the following objective:
\begin{equation}\label{distr_opt_problem}
	F(x) = \frac{1}{m} \sum_{j=1}^{m} f_j(x) \to \min,
\end{equation}
where each function \( f_j \) corresponds to a node (e.g., a computer) in a network, which computes \( \nabla f_j \). We assume the existence of a central node that maintains access to the aggregate gradient:
\begin{equation}\label{distr_opt_central_node}
	\tilde{F}(x) = \frac{1}{\tilde{m}} \sum_{l=1}^{\tilde{m}} f_l(x) \quad \text{(or simply \( f_1(x) \))}.
\end{equation}

The remaining nodes transmit their gradient information \( \nabla f_j \) to the central node. The central node thus has access to \( \nabla F(x) \), but not necessarily to the full function value \( F(x) \); it may only know \( \tilde{F}(x) \).

Additionally, in our analysis we employ the assumption of so-called \emph{statistical similarity}:

\begin{equation*}
	\|\nabla F(x) - \nabla \tilde{F}(x) - \nabla F(y) + \nabla \tilde{F}(y)\|^2 \leq \sigma \|x - y\|^2, \quad \forall x, y,
\end{equation*}
where $\sigma > 0$ quantifies the degree of similarity between the Hessian matrices of the local and global loss functions.

Statistical similarity is a widely used technique for accelerating distributed optimization methods (see \cite{shamir2014communication}, \cite{arjevani2015communication}, \cite{rogozin2021towards}, \cite{tian2022acceleration}; and in the context of variational inequalities, see \cite{beznosikov2023similarity}, \cite{beznosikov2022compression}). These works leverage the similarity structure of the data to obtain improved theoretical guarantees. However, they do not address constrained problems or the Frank-Wolfe method.

The idea of improving the condition number via relative smoothness and statistical similarity was initially introduced in \cite{hendrikx2020statistically}, where it was applied to empirical risk minimization using gradient-based algorithms. In this work, we extend this approach to the Frank-Wolfe method.

\subsection{Contributions and Outline}

Our main contributions are as follows:
\begin{itemize}
	\item We propose a fully adaptive Frank-Wolfe method with Bregman step size for relatively smooth problems. The method does not rely on prior knowledge of problem-specific parameters.
	\item A linear convergence rate is established for relatively smooth and relatively strongly convex problems, given additional assumptions on the exact solution.
	\item We theoretically demonstrate that the proposed method outperforms its Euclidean counterpart in the centralized distributed optimization setting.
	\item We provide empirical evidence that our method outperforms non-adaptive and partially adaptive variants.
\end{itemize}

\subsection{Related works}

The Frank-Wolfe algorithm originally introduced in \cite{frank1956algorithm}, also known as the Conditional Gradient Method \cite{levitin1966constrained}. Unlike classical projected gradient methods, Frank-Wolfe replaces the often costly projection step with a cheaper linear minimization oracle making it especially attractive in large-scale applications where projection is prohibitively expensive (see more on this in \cite{combettes2021complexity} and \cite{braun2022conditional} Table 1.1). Furthermore, the algorithm has been successfully applied to distributed optimization frameworks with decentralized architectures \cite{wai2017decentralized}.

The classical smoothness condition, characterized by the Lipschitz gradient property, is central to the development and analysis of first-order optimization methods. However, many applications involve objective functions that, despite being convex and differentiable, do not satisfy this smoothness assumption. In \cite{bauschke2017descent}, the concept of relative smoothness was introduced, significantly expanding the analytical framework for a broad class of functions. Many functions that are not smooth in the classical sense are relatively smooth, including the D-optimal design problem, the Poisson linear inverse problem, and even simple univariate functions such as \( f(x) = x^4 \) or \( f(x) = -\log{x} + x^2 \) (see \cite{bauschke2017descent}, \cite{lu2018relatively}, \cite{dragomir2021quartic} for further examples).

The notion of relative smoothness has also found applications in distributed optimization problems. For instance, in empirical risk minimization \cite{hendrikx2020statistically}, it has been shown that, under the assumption of statistical similarity, the objective becomes relatively smooth with a known constant, significantly reducing the condition number. In this setting, alongside accelerated theoretical guarantees, the work \cite{hendrikx2020statistically} also proposes an adaptive algorithm, SPAG. In our work we extend this idea on Frank-Wolfe method.

\subsection{Frank-Wolfe gap}
Before presenting the main convergence estimates, we first recall the definition of one of the most important properties of the Frank-Wolfe algorithm.

\begin{definition}
	The Frank-Wolfe gap at iteration \( k \) is defined as
	\begin{equation*}\label{fw_gap}
		G(x) = \max_{s \in Q} - \langle \nabla F(x_k), s - x_k \rangle.
	\end{equation*}
\end{definition}

This quantity is a key parameter in the Frank-Wolfe algorithm (see \cite{bomze2021frank}), serving as a measure of optimality and a potential stopping criterion:
\begin{equation}\label{fw_gap_ineq}
	G(x) \geq -\nabla F(x)^\top (x^* - x) \geq F(x) - F^*.
\end{equation}

\section{Frank-Wolfe Method for Relatively Smooth Objectives with Fully Adaptive Step Size}

In this section, we introduce a Frank-Wolfe method with full parameter adaptation for relatively smooth objectives. As in our previous work \cite{vyguzov2024adaptivevariantfrankwolfemethod}, we select the shortest step size. However, in contrast to that work, both the smoothness constant \( L \) from \eqref{def:rs} and the TSE exponent \( \gamma \) from \eqref{def:tse} are now adapted at each iteration:

\begin{equation*}
	\alpha_k := \min \left\{ \left( \frac{- \langle \nabla f(x_k), d_k \rangle }{2 L_k V(s_k, x_k)} \right) ^{\frac{1}{\gamma_k - 1}}, 1 \right\}.
\end{equation*}

We provide convergence guarantees for this method and show that the adaptive step-size strategy preserves the convergence rate of the algorithm.

\begin{algorithm}  
	\caption{Frank-Wolfe Method with Fully Adaptive Step Size}  
	\label{alg:exp_adapt_simplified}  
	\begin{algorithmic}[1]  
		\Require Initial point \( x_0 \in \operatorname{rint} Q \), \( \gamma_0 = 2 \), \( L_0 > 0 \), \( \delta > 0 \), \( \eta > 1 \)
		\State Initialize: \( x_0 \), \( \gamma_{-1} = \gamma_0 \), \( L_{-1} = L_0 \)  
		\For{\( k = 0, 1, 2, \dots \)}  
		\State \( L_k \gets L_{k-1} / 2 \), \( \gamma_k \gets \min \{ \gamma_k + \eta (\gamma_k - 1), \gamma_{\max} \} \)
		\While{true}
		\State \( s_k \in \text{LMO}_Q (\nabla f(x_k)) \)  
		\State \( d_k = s_k - x_k \)  
		\State \( \alpha_k := \min \left\{ \left( \frac{- \langle \nabla f(x_k), d_k \rangle }{2 L_k V(s_k, x_k)} \right) ^{\frac{1}{\gamma_k - 1}}, 1 \right\} \)  
		\If{\( f(x_k + \alpha_k d_k) \leq f(x_k) + \alpha_k \langle \nabla f(x_k), d_k \rangle + \alpha_k^{\gamma_k} L_k V(s_k, x_k) \)}  \label{alg:exp_adapt_simplified:cond}
		\State \textbf{break}  
		\ElsIf{\( k \ \% \ 2 == 0 \)}  
		\State \( L_k \gets 2 L_k \)
		\Else  
		\State \( \gamma_k \gets 1 + \frac{\gamma_k - 1}{\eta} \)
		\EndIf  
		\EndWhile
		\State \( x_{k+1} \gets x_k + \alpha_k d_k \)
		\EndFor  
	\end{algorithmic}  
\end{algorithm}

\subsection{Sublinear Convergence Rate}

In this section, we analyze the general case under minimal assumptions.

Our analysis employs a standard technique that minimizes an approximation of the objective function rather than the exact function. To that end, we present two auxiliary lemmas.

\begin{lemma}
	Suppose that \( f \) satisfies relative smoothness \eqref{def:rs} and has the TSE property \eqref{def:tse} with some global \( L \) and \( \gamma \in (1, 2] \); then:
	\begin{equation}\label{lemma:rs_tse}
		f(x_k + \alpha_k d_k) \leq f(x_k) + \alpha_k \nabla f(x_k)^\top d_k + \alpha_k^\gamma L V(x_k + d_k, x_k).
	\end{equation}
\end{lemma}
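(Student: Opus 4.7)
The plan is to combine the two hypotheses directly: relative smoothness produces a quadratic-like upper bound with a Bregman term evaluated at the iterate after the step, and the TSE property lets us pull the step size $\alpha_k$ out of that Bregman term with the correct power $\gamma$.

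First I would apply the relative smoothness inequality \eqref{def:rs} with $x = x_k + \alpha_k d_k$ and $y = x_k$, obtaining
\begin{equation*}
	f(x_k + \alpha_k d_k) \leq f(x_k) + \alpha_k \langle \nabla f(x_k), d_k \rangle + L\, V(x_k + \alpha_k d_k, x_k).
\end{equation*}
Next I would rewrite both arguments of $V$ as convex combinations with weight $\alpha_k$:
\begin{equation*}
	x_k + \alpha_k d_k = (1 - \alpha_k)\, x_k + \alpha_k\, (x_k + d_k), \qquad x_k = (1 - \alpha_k)\, x_k + \alpha_k\, x_k.
\end{equation*}
This is exactly the shape required by \eqref{def:tse}, applied with $x = x_k$, $z = x_k + d_k$, $\tilde z = x_k$, and $\theta = \alpha_k$, which yields
\begin{equation*}
	V(x_k + \alpha_k d_k, x_k) \leq \alpha_k^{\gamma}\, V(x_k + d_k, x_k).
\end{equation*}
Substituting this bound back into the relative-smoothness inequality gives the desired estimate \eqref{lemma:rs_tse}.

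There is essentially no hard step here: the proof is a two-line composition of the two defining inequalities. The only subtle point to double-check is admissibility of the points in $Q$, so that both \eqref{def:rs} and \eqref{def:tse} can be invoked; this is fine because $x_k, s_k \in Q$ and $\alpha_k \in [0,1]$, so $x_k + \alpha_k d_k = (1-\alpha_k) x_k + \alpha_k s_k$ lies in $Q$ by convexity, and the point $x_k + d_k = s_k$ used in the TSE application also belongs to $Q$.
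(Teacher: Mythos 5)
Your proof is correct and follows essentially the same route as the paper: apply \eqref{def:tse} with $\theta = \alpha_k$, $z = s_k$, $\tilde z = x_k$ to get $V(x_k + \alpha_k d_k, x_k) \leq \alpha_k^\gamma V(s_k, x_k)$, then substitute into the relative-smoothness bound. If anything, your write-up is slightly more complete than the paper's, which records only the TSE step and leaves the invocation of \eqref{def:rs} and the feasibility of the points implicit.
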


\begin{proof}
	Applying \eqref{def:tse}, we obtain:
	\begin{gather*}
		V(x_k + \alpha_k d_k, x_k) = V((1 - \alpha_k)x_k + \alpha_k(x_k + d_k), (1 - \alpha_k)x_k + \alpha_k x_k) \leq \\
		\leq \alpha_k^\gamma V(x_k + d_k, x_k) = \alpha_k^\gamma V(s_k, x_k).
	\end{gather*}
	This completes the proof.
\end{proof}

Lemma \ref{lemma:rs_tse} ensures that the condition in line \ref{alg:exp_adapt_simplified:cond} of the algorithm is satisfiable. Throughout the remainder of the text, we assume that \ref{lemma:rs_tse} holds for some local \( L_k \) and \( \gamma_k \).

\begin{lemma}[Progress lemma]\label{descent_lemma}
	Suppose \( f \) satisfies relative smoothness \eqref{def:rs} and the triangle scaling property \eqref{def:tse}. Then, for each iteration \( k \) of Algorithm~\ref{alg:exp_adapt_simplified}, we have:
	\begin{equation}\label{lemma:descent_alpha_less_1}
		f(x_{k+1}) - f(x_k) \leq \nabla f(x_k)^\top d_k \cdot \min \left\{ \frac{1}{2}, \ \frac{1}{2} \left( \frac{- \nabla f(x_k)^\top d_k}{ 2 L_{\max} V(s_k, x_k) } \right)^{1/(\gamma_{\min} - 1)} \right\},
	\end{equation}
	where $L_{\max} = \max\limits_{i=0,\dots,k} L_i$ and $\gamma_{\min} = \min\limits_{i=0,\dots,k} \gamma_i$.
\end{lemma}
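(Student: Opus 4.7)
The plan is to start from the backtracking acceptance condition on line \ref{alg:exp_adapt_simplified:cond}, which upon exit of the inner loop guarantees
\[
f(x_{k+1}) - f(x_k) \leq \alpha_k \nabla f(x_k)^\top d_k + \alpha_k^{\gamma_k} L_k V(s_k, x_k),
\]
then to case-split on whether the $\min$ in the definition of $\alpha_k$ is attained at $1$ or at the explicit exponential formula, and finally to pass from the per-iteration constants $(L_k, \gamma_k)$ to the uniform $(L_{\max}, \gamma_{\min})$ by monotonicity.

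In the first case, $\alpha_k = 1$, which by definition of the $\min$ forces $-\nabla f(x_k)^\top d_k \geq 2 L_k V(s_k, x_k)$; substituting $\alpha_k = \alpha_k^{\gamma_k} = 1$ into the backtracking bound gives $f(x_{k+1}) - f(x_k) \leq \tfrac{1}{2} \nabla f(x_k)^\top d_k$, matching the first branch of the $\min$. In the second case, $\alpha_k = \bigl(-\nabla f(x_k)^\top d_k / (2 L_k V(s_k, x_k))\bigr)^{1/(\gamma_k - 1)} < 1$, so raising to the power $\gamma_k$ one finds algebraically that $\alpha_k^{\gamma_k} L_k V(s_k, x_k) = \alpha_k \cdot (-\nabla f(x_k)^\top d_k)/2$, and the backtracking bound collapses to $f(x_{k+1}) - f(x_k) \leq \tfrac{1}{2}\alpha_k \nabla f(x_k)^\top d_k$.

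What remains is to replace $\alpha_k$ by the uniform lower bound $\bigl(-\nabla f(x_k)^\top d_k / (2 L_{\max} V(s_k, x_k))\bigr)^{1/(\gamma_{\min}-1)}$. Since $L_k \leq L_{\max}$ and the ratio is already strictly less than $1$ in the second case, one has $-\nabla f(x_k)^\top d_k / (2 L_{\max} V(s_k, x_k)) \leq -\nabla f(x_k)^\top d_k / (2 L_k V(s_k, x_k)) < 1$; combining this with the facts that $x \mapsto x^c$ is increasing in $x > 0$ for fixed $c > 0$, and $c \mapsto x^c$ is decreasing in $c > 0$ for fixed $x \in (0,1)$, together with $1/(\gamma_k - 1) \leq 1/(\gamma_{\min} - 1)$, yields the required lower bound on $\alpha_k$. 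Multiplying by the negative scalar $\tfrac{1}{2}\nabla f(x_k)^\top d_k$ reverses the inequality and produces the second branch of the $\min$. Because $\nabla f(x_k)^\top d_k < 0$, taking the $\min$ of two positive coefficients corresponds to the larger (less negative) upper bound, so either branch separately implies the statement. I expect the main obstacle to be precisely this monotonicity chain: it has to be applied in the correct direction and relies crucially on the second-case assumption $-\nabla f(x_k)^\top d_k < 2 L_k V(s_k, x_k)$, since the relevant exponent-monotonicity holds only when the base is strictly less than one.
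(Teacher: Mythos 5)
Your proof is correct and follows essentially the same route as the paper's: invoke the accepted descent inequality, split on $\alpha_k=1$ versus $\alpha_k<1$, and simplify (your observation that $\alpha_k^{\gamma_k}L_kV(s_k,x_k)=\alpha_k\cdot(-\nabla f(x_k)^\top d_k)/2$ is a tidier version of the paper's algebra). In fact you are slightly more complete than the paper, whose proof stops at the bound with the per-iteration constants $(L_k,\gamma_k)$; your explicit monotonicity chain passing to $(L_{\max},\gamma_{\min})$ — valid because the base is in $(0,1)$ in the second case — is exactly the step the paper leaves implicit.
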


Note that the left branch of the minimum in Lemma~\ref{descent_lemma} applies when the step size \eqref{step_size_l_smooth} equals 1, and the right branch otherwise.

It is worth to note that left side of the min operator in the lemma~\ref{descent_lemma} is satisfied when step size \eqref{step_size_l_smooth} equals 1 and the right side otherwise.

\begin{proof}
	If \( \alpha_k = 1 \), then by \eqref{lemma:rs_tse}:
	\begin{gather*}
		f(x_{k+1}) \leq f(x_k) + \nabla f(x_k)^\top d_k + L_k V(s_k, x_k),
	\end{gather*}
	and thus:
	\begin{gather*}
		f(x_{k+1}) - f(x_k) \leq \nabla f(x_k)^\top d_k + L_k V(s_k, x_k) \leq \frac{1}{2} \nabla f(x_k)^\top d_k,
	\end{gather*}
	where the last inequality uses the step-size definition \eqref{step_size_l_smooth}.
	
	If \( \alpha_k < 1 \), then substituting the step size from \eqref{step_size_l_smooth} into \eqref{lemma:rs_tse}, we get:
	\begin{equation*}
		\begin{aligned}
			& f(x_{k+1}) - f(x_k) \leq \\ 
			&\quad - \frac{(- \nabla f(x_k)^\top d_k)^{\gamma_k / (\gamma_k - 1)} }{(2 L_k V(s_k, x_k))^{1 / (\gamma_k - 1)}} + \frac{(- \nabla f(x_k)^\top d_k )^{\gamma_k / (\gamma_k - 1)}}{2^{\gamma_k / (\gamma_k - 1)} (L_k V(s_k, x_k))^{1 / (\gamma_k - 1)}} \\
			&\quad = - \frac{1}{2} \frac{(-\nabla f(x_k)^\top d_k )^{\gamma_k / (\gamma_k - 1)}}{(2L_k V(s_k, x_k))^{1 / (\gamma_k - 1)}} \\
			&\quad = \frac{1}{2} \nabla f(x_k)^\top d_k \left(  \frac{-\nabla f(x_k)^\top d_k }{2L_k V(s_k, x_k) } \right)^{1 / (\gamma_k - 1)}.
		\end{aligned}
	\end{equation*}
	This proves the lemma.
\end{proof}

To prove the convergence rate of the Algorithm \eqref{alg:exp_adapt_simplified}, we will need the next auxiliary lemma.

\begin{lemma}\label{lemma:induction_aux_lemma}
	Suppose we are given a sequence \( \{h_k\} \) satisfying the recurrence
	\begin{equation*}
		h_{k+1} \leq h_k \left(1 - \left(\frac{h_k}{K}\right)^{1/(\gamma - 1)} \right),
	\end{equation*}
	where \( K > h_k > 0 \) and \( \gamma \in (1, 2] \). Then it holds that
	\begin{equation*}
		h_k \leq \left(\frac{2}{k+2}\right)^{\gamma - 1} K.
	\end{equation*}
\end{lemma}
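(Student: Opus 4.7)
The plan is to substitute away the exponent $\gamma-1$ and then apply the classical Frank-Wolfe style induction. Concretely, I would define $b_k := (h_k/K)^{1/(\gamma-1)}$, so that $b_k \in (0,1)$ since $0 < h_k < K$. Dividing the given recurrence by $K$ and raising to the power $1/(\gamma-1)$, it becomes $b_{k+1} \leq b_k(1-b_k)^{1/(\gamma-1)}$. Because $\gamma \in (1,2]$ forces $1/(\gamma-1) \geq 1$ and $1-b_k \in (0,1)$, I would invoke the elementary bound $(1-b_k)^{1/(\gamma-1)} \leq 1-b_k$ to reduce to
\begin{equation*}
b_{k+1} \leq b_k(1-b_k).
\end{equation*}
Proving $b_k \leq 2/(k+2)$ for this simplified recurrence suffices, since unrolling the substitution yields $h_k = K b_k^{\gamma-1} \leq K\bigl(2/(k+2)\bigr)^{\gamma-1}$, which is exactly the claim.

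For the simplified recurrence, the cleanest route is the reciprocal trick: taking reciprocals and applying partial fractions,
\begin{equation*}
\frac{1}{b_{k+1}} \geq \frac{1}{b_k(1-b_k)} = \frac{1}{b_k} + \frac{1}{1-b_k} \geq \frac{1}{b_k} + 1.
\end{equation*}
Telescoping yields $1/b_k \geq k + 1/b_0 \geq k+1$, hence $b_k \leq 1/(k+1) \leq 2/(k+2)$ for all $k \geq 0$. Alternatively, a direct induction on the target bound works: for $k \geq 2$, the map $x \mapsto x(1-x)$ is increasing on $[0, 1/2] \supseteq [0, 2/(k+2)]$, so the inductive step collapses to the elementary inequality $k(k+3) \leq (k+2)^2$, while the cases $k \in \{0,1\}$ follow from the global maximum $x(1-x) \leq 1/4$.

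I expect no substantial obstacle beyond identifying the correct substitution; once the recurrence is rewritten in terms of $b_k$, the problem reduces to the textbook $O(1/k)$ Frank-Wolfe analysis, and a single telescoping argument completes the proof. The only bookkeeping points worth noting are that the hypothesis $K > h_k$ is needed to ensure $b_k \in (0,1)$ (so that the partial-fraction step is well-defined and the inequality $(1-b_k)^{1/(\gamma-1)} \leq 1 - b_k$ applies), and that the final exponentiation back to $h_k$ preserves monotonicity because $\gamma - 1 > 0$.
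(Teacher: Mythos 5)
Your proof is correct, and it takes a genuinely different route from the paper. The paper argues directly on $h_k$ with a two-case analysis (either $(h_k/K)^{1/(\gamma-1)}$ is already below $2/(k+2)$, or one feeds that lower bound back into the recurrence), keeping the exponent $1/(\gamma-1)$ in play throughout the induction. You instead linearize the exponent first via the substitution $b_k := (h_k/K)^{1/(\gamma-1)}$, use $\gamma\in(1,2]$ only once (to justify $(1-b_k)^{1/(\gamma-1)}\leq 1-b_k$), and reduce everything to the textbook recurrence $b_{k+1}\leq b_k(1-b_k)$, which the reciprocal-and-telescope trick dispatches cleanly. Your route is arguably tidier and yields the slightly sharper intermediate bound $b_k\leq 1/(k+1)$, i.e.\ $h_k\leq K/(k+1)^{\gamma-1}$, of which the stated $\left(2/(k+2)\right)^{\gamma-1}K$ is a weakening; it also isolates exactly where each hypothesis is used ($0<h_k<K$ for $b_k\in(0,1)$, $\gamma\leq 2$ for the exponent comparison, $\gamma>1$ for monotonicity when exponentiating back). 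The paper's case split, by contrast, mirrors the structure of the convergence theorem where the lemma is applied, which is presumably why the authors phrase it that way; but as a self-contained proof of the lemma your argument is complete and, if anything, more transparent. The only minor bookkeeping point is that the telescoped bound $1/b_k\geq k+1/b_0$ covers $k\geq 1$, while $k=0$ follows directly from $b_0<1=2/(0+2)$; you may wish to state that base case explicitly.
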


\begin{proof}
	We consider two cases. 
	
	\textbf{Case 1:} Suppose
	\[
	\left(\frac{h_k}{K}\right)^{1/(\gamma - 1)} < \frac{2}{k+2}.
	\]
	Then it follows directly that
	\[
	h_k < \left(\frac{2}{k+2}\right)^{\gamma - 1} K,
	\]
	which is precisely the desired bound.
	
	\textbf{Case 2:} Suppose instead that
	
	\[
	\left(\frac{h_k}{K}\right)^{1/(\gamma - 1)} \geq \frac{2}{k+2},
	\]
	which implies
	\[
	h_k \geq \left(\frac{2}{k+2}\right)^{\gamma - 1} K.
	\]
	Using the assumed recurrence, we obtain
	\begin{equation*}
		\begin{aligned}
			& h_{k+1} \leq h_k \left(1 - \left(\frac{h_k}{K}\right)^{1/(\gamma - 1)} \right)
			\leq h_k \left(1 - \frac{2}{k+2} \right) \leq \left(\frac{2}{k+2}\right)^{\gamma - 1} K \\
			& \left(1 - \frac{2}{k+2} \right) \leq \left(\frac{2}{k+2}\right)^{\gamma - 1} K .\\
		\end{aligned}
	\end{equation*}
	
	Where the last inequality since \( \gamma \in (1,2] \).
	
	This proves the lemma.
\end{proof}

Now we prove the convergence rate of Algorithm~\ref{alg:exp_adapt_simplified}.

\begin{theorem}
	Suppose that for all \( x, y \in Q \), the Bregman divergence \( V(x, y) \) satisfies \( V(x,y) \leq \frac{R^2}{2} \) and the triangle scaling property \eqref{def:tse} holds for some unknown \( \gamma \in (1,2] \). If \( f \) is \( L \)-relatively smooth, then Algorithm~\ref{alg:exp_adapt_simplified} achieves the following convergence rate:
	\begin{equation}\label{theorem:alg_exp_adapt}
		f(x_k) - f^* \leq \left( \frac{2}{k+2} \right)^{\gamma_{\min} - 1} L_{\max} R^2, \quad \forall k \geq 1,
	\end{equation}
	
	where $L_{\max} = \max\limits_{i=0,\dots,k} L_i$ and $\gamma_{\min} = \min\limits_{i=0,\dots,k} \gamma_i$.
\end{theorem}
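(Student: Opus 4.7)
The plan is to combine the progress estimate of Lemma~\ref{descent_lemma} with the Frank--Wolfe gap lower bound on $-\langle \nabla f(x_k), d_k\rangle$ and the diameter bound on $V(s_k,x_k)$ in order to derive a one-step recursion on $h_k := f(x_k) - f^*$, and then reduce this recursion to the form handled by Lemma~\ref{lemma:induction_aux_lemma}.

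First, I would use convexity of $f$ together with the fact that $s_k$ minimizes the linear model over $Q$ to show
\begin{equation*}
-\langle \nabla f(x_k), d_k \rangle \;=\; -\langle \nabla f(x_k), s_k - x_k \rangle \;\geq\; -\langle \nabla f(x_k), x^* - x_k \rangle \;\geq\; f(x_k) - f^* \;=\; h_k,
\end{equation*}
which is exactly the Frank--Wolfe gap inequality \eqref{fw_gap_ineq}. Combined with $V(s_k, x_k) \leq R^2/2$, and replacing $L_k$ by $L_{\max}$ and $\gamma_k$ by $\gamma_{\min}$ in the bound of Lemma~\ref{descent_lemma}, this yields
\begin{equation*}
h_{k+1} \;\leq\; h_k \left(1 - \tfrac{1}{2}\min\!\left\{1,\, \left(\frac{h_k}{L_{\max} R^2}\right)^{\!1/(\gamma_{\min}-1)}\right\}\right).
\end{equation*}

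Next I would split into two regimes. When the minimum equals $1/2$ (equivalently, when $\alpha_k = 1$), the recursion reduces to $h_{k+1} \leq h_k/2$, so progress is geometric and the target bound is easily verified (this also serves as a base case, since then $h_k \leq L_{\max} R^2$ will quickly hold). When the minimum is attained by the second branch, the recursion takes the form $h_{k+1} \leq h_k\bigl(1 - (h_k/K)^{1/(\gamma_{\min}-1)}\bigr)$ after absorbing the factor $1/2$ into the constant $K := 2^{\gamma_{\min}-1} L_{\max} R^2$, placing us exactly in the setting of Lemma~\ref{lemma:induction_aux_lemma}. Applying that lemma with this $K$ produces the desired $O\bigl((k+2)^{-(\gamma_{\min}-1)}\bigr)$ rate, and one folds the constant back into the claimed form $(2/(k+2))^{\gamma_{\min}-1} L_{\max} R^2$.

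The main technical obstacle is the bookkeeping around $L_{\max}$ and $\gamma_{\min}$: because these are the \emph{running} maximum and minimum over past iterations, the constant $K$ in Lemma~\ref{lemma:induction_aux_lemma} changes with $k$, so one must verify that the induction in Lemma~\ref{lemma:induction_aux_lemma} still goes through when $K$ is replaced by a monotone sequence (which it does, since $K$ only grows). A secondary subtlety is the transition between the two branches of the $\min$: one needs to guarantee that once $h_k \leq L_{\max} R^2$ the iterates remain in the ``second branch'' regime, which follows from the descent property $h_{k+1} \leq h_k$ and monotonicity of $L_{\max}$.
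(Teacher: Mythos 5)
Your proposal follows essentially the same route as the paper: Lemma~\ref{descent_lemma} plus the Frank--Wolfe gap inequality \eqref{fw_gap_ineq} and the bound $V(s_k,x_k)\leq R^2/2$ give a one-step recursion on $h_k=f(x_k)-f^*$, which is then fed into Lemma~\ref{lemma:induction_aux_lemma}, with the full-step case $\alpha_k=1$ handled separately via $h_{k+1}\leq h_k/2$ (the paper does this by an explicit induction with a base case at $k=1$). One point to flag: absorbing the leading $\tfrac12$ into $K:=2^{\gamma_{\min}-1}L_{\max}R^2$ and applying Lemma~\ref{lemma:induction_aux_lemma} yields $h_k\leq\left(\tfrac{2}{k+2}\right)^{\gamma_{\min}-1}2^{\gamma_{\min}-1}L_{\max}R^2=\left(\tfrac{4}{k+2}\right)^{\gamma_{\min}-1}L_{\max}R^2$, so the constant does \emph{not} ``fold back'' into the claimed form --- you lose a factor $2^{\gamma_{\min}-1}\leq 2$; the paper's own proof silently drops this $\tfrac12$ at step \textcircled{1} (which would require $-\nabla f(x_k)^\top d_k\geq 2h_k$ rather than $\geq h_k$), so your more careful bookkeeping actually exposes a constant-factor looseness in the stated theorem rather than a flaw in your argument. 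Your observations about the monotonicity of the running $L_{\max}$, $\gamma_{\min}$ and the regime transition are sound and are glossed over in the paper.
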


\begin{proof}
	We consider two cases based on the step size \eqref{step_size_l_smooth}: when it equals 1 and when it is less than 1.
	
	\textbf{Case 1 ($\alpha < 1$)}: Using the right side of the min operator in Lemma~\ref{descent_lemma}, we obtain:
	
	\begin{equation*}
		f(x_{k+1}) - f(x_k) \leq \frac{\nabla f(x_k)^\top d_k}{2} \left( \frac{- \nabla f(x_k)^\top d_k}{ 2 L_k V(s_k, x_k) } \right)^{1/(\gamma_k - 1)}.
	\end{equation*}
	After simplification, this yields:
	\[
	\begin{aligned}
		f(x_{k+1}) - f^* &\leq (f(x_k) - f^*) + \frac{\nabla f(x_k)^\top d_k}{2} 
		\left( \frac{- \nabla f(x_k)^\top d_k}{ 2 L_k V(s_k, x_k) } \right)^{1/(\gamma_k - 1)} \\
		&\overset{\textcircled{1}}{\leq} (f(x_k) - f^*)\left( 1 - \left( \frac{f(x_k) - f^*}{2 L_k V(s_k, x_k)} \right)^{1/(\gamma_k - 1)} \right) \\
		&\leq (f(x_k) - f^*)\left( 1 - \left( \frac{f(x_k) - f^*}{2 L_\text{max} V(s_k, x_k)} \right)^{1/(\gamma_{\min} - 1)} \right),
	\end{aligned}
	\]
	
	where \textcircled{1} follows from the FW-gap property \eqref{fw_gap_ineq}.
	
	Applying Lemma~\ref{lemma:induction_aux_lemma} with $h_{k+1} = f(x_{k+1}) - f^*$, $h_k = f(x_k) - f^*$, and $K = 2 L_k V(s_k, x_k)$ yields the desired estimate.
	
	\textbf{Case 2 ($\alpha_k = 1$)}: We proceed by induction. 
	
	\textit{Base case ($k=1$)}: Using the left side of the min operator in Lemma~\ref{descent_lemma}:
	\begin{equation*}
		\begin{aligned}
			f(x_1) - f^* &\leq f(x_0) - f^* + \langle \nabla f(x_0), x_1 - x_0 \rangle + L_0 V(x_1, x_0) \\
			&\leq f(x_0) - f^* + \langle \nabla f(x_0), x^* - x_0 \rangle + L_0 V(x_1, x_0) \\
			&\overset{\textcircled{1}}{\leq} L_0 V(x_1, x_0) \leq \frac{L_0 R^2}{2} \leq \frac{2L_0 R^2}{3} \\
			&\leq \left( \frac{2}{1 + 2} \right)^{\gamma_0 - 1} L_0 R^2.
		\end{aligned}
	\end{equation*}
	
	Inequality \textcircled{1} follows from the convexity of $f$, specifically $f(x_0) - f^* + \langle \nabla f(x_0), x^* - x_0\rangle \leq 0$, while the final inequality holds since $0 < \gamma_0 - 1 \leq 1$.
	
	\textit{Inductive step}: Assume $f(x_k) - f^* \leq \left( \frac{2}{k+2} \right) ^{\gamma_k - 1} L_k R^2$ from either the base case or Case 1. We must show:
	\begin{equation*}
		\begin{aligned}
			f(x_{k+1}) - f^* \leq \left( \frac{2}{k+3} \right)^{\gamma_{\min} - 1} L_{\max} R^2.
		\end{aligned}
	\end{equation*}
	
	Applying $\alpha_k = 1$ and the FW-gap \eqref{fw_gap_ineq} to \eqref{descent_lemma} gives:
	\begin{equation*}
		\begin{aligned}
			f(x_{k+1}) - f^* &\leq \frac{f(x_k) - f^*}{2} < \left( \frac{k+2}{k+3} \right)^{\gamma_k - 1}(f(x_k) - f^*) \\
			&\leq \left(\frac{2}{k+2} \right)^{\gamma_k - 1} \left(\frac{k + 2}{k+3} \right)^{\gamma_k - 1} L_k R^2 \\
			&\leq \left(\frac{2}{k+3} \right)^{\gamma_{\min} - 1}  L_\text{max} R^2.
		\end{aligned}
	\end{equation*}
	
	This completes the inductive step, establishing the claimed inequality for all iterations $k$.
	
	The theorem is proven.
\end{proof}

\subsection{Analyzes of adaptation steps}

This approach provides two key advantages. First, in practice, it can lead to acceleration because as demonstrated in \cite{hanzely2021accelerated}, a larger TSE can results in a better convergence rate for gradient method. Second, this method eliminates the need for prior knowledge of the exact TSE value. 

We will now show that adaptivity does not affect the convergence rate of Algorithm \ref{alg:exp_adapt_simplified} in any way.


\begin{remark}\label{remark:rs_adapt}
	If $f$ has $L$ relative-smoothness constant and satisfy TSE property with $\gamma$, then through Algorithm~\ref{alg:exp_adapt_simplified} at iteration $k$ we have $L_k \leq L_{\max}$ and $\gamma - 1 \geq \gamma_{\min} - 1$, where $L_{\max} = \max\limits_{i=0,\dots,k} L_i$ and $\gamma_{\min} = \min\limits_{i=0,\dots,k} \gamma_i$.
\end{remark}

\begin{remark}\label{remark:checks_amount}
	Assume that $f$ is a relatively smooth function with constant $L$. Suppose further that the function possesses the TSE property with constant $\gamma$. Let $i_k$ denote the number of line~\ref{alg:exp_adapt_simplified:cond} checks in Algorithm~\ref{alg:exp_adapt_simplified} at iteration $k$. Then the total number of line~\ref{alg:exp_adapt_simplified:cond} checks will be:
	\begin{align*}
		\sum_{k=0}^N i_k &= \sum_{k=1}^N \left(3 + \log_2\frac{L_k}{L_{k-1}} + \log_\eta\frac{\gamma_{k-1}-1}{\gamma_k-1}\right) \\
		&= 3N + \log_2\frac{L_N}{L_0} + \log_\eta\frac{\gamma_0-1}{\gamma_N-1} \\
		&\leq 3N + \log_2\frac{2L_{\max}}{L_0} + \log_\eta\frac{\gamma_0-1}{\gamma_{\min}-1} = O(N),
	\end{align*}
	where $\eta > 1$ is the constant from Algorithm~\ref{alg:exp_adapt_simplified}.
	
	Thus, the total number of inequality checks from line~\ref{alg:exp_adapt_simplified:cond} after $N$ iterations is $O(N)$ and is the same as initial algorithm iterations number.
\end{remark}

\subsection{Linear convergence rate}

In this section, we establish the linear convergence rate of Algorithm~\ref{alg:exp_adapt_simplified}. To this end, we adapt standard definitions to the framework of relative smoothness and introduce several necessary concepts. 

Next, we demonstrate how relative smoothness significantly improves the convergence rate in distributed optimization.

We now introduce the scaling condition adapted to the Bregman divergence.

\begin{definition}
	The \emph{scaling condition} is defined as
	\begin{equation} \label{scaling_cond_def}
		\begin{aligned}
			\frac{ - \nabla f(x_k)^\top d_k}{V(s_k, x_k)} \geq \tau \ (-\nabla f(x_k))^\top \left( \frac{x^* - x_k}{V(x^*, x_k)} \right)
		\end{aligned}
	\end{equation}
	for a fixed constant $\tau > 0$ and $x^* \in \text{argmin}_{x \in Q} f(x)$.
\end{definition}

Note the similarity of this property to the standard Euclidean case (see, for example, \cite{braun2022conditional}, Proposition 2.16).

We now determine the parameter $\tau$ in Definition~\ref{scaling_cond_def}.

\begin{lemma}\label{scaling_condition_determined}
	Suppose $f$ is a convex function, and let $x^*$ be an interior point of $Q$ in the Euclidean sense. Then, for Algorithm~\ref{alg:exp_adapt_simplified}, the scaling condition takes the form:
	\begin{equation}
		\begin{aligned}
			\frac{ - \nabla f(x_k)^\top d_k}{V(s_k, x_k)} \geq \frac{\delta}{D} \cdot \frac{\epsilon}{D_V} \cdot (-\nabla f(x_k))^\top \left( \frac{x^* - x_k}{V(x^*, x_k)} \right),
		\end{aligned}
	\end{equation}
	where $\epsilon$ is the desired residual, $\partial Q$ is a boundary of the feasible set, $\delta = \text{dist}(x^*, \partial Q) = \inf_{y \in \partial Q} \| x^* - y \|_2$, $D = \max_{x,y} \| x - y\|_2$, and $D_V = \max_{x,y} V(x, y)$ for $x, y \in Q$.
\end{lemma}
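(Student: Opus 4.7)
The plan is to exploit the interiority of $x^*$ by constructing an auxiliary feasible point that witnesses the LMO's ability to point along the direction of steepest descent. Since $B(x^*,\delta)\subset Q$, the point $y_k:=x^* - \delta\,\nabla f(x_k)/\|\nabla f(x_k)\|_2$ lies in $Q$ whenever $\nabla f(x_k)\neq 0$ (the zero-gradient case is trivial, as both sides of \eqref{scaling_condition_determined} vanish). The optimality of $s_k$ as an LMO output then gives $\nabla f(x_k)^\top s_k \leq \nabla f(x_k)^\top y_k = \nabla f(x_k)^\top x^* - \delta\,\|\nabla f(x_k)\|_2$, so with $d_k = s_k - x_k$ one obtains
\[
-\nabla f(x_k)^\top d_k \;\geq\; -\nabla f(x_k)^\top(x^* - x_k) \;+\; \delta\,\|\nabla f(x_k)\|_2.
\]

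The factor $\delta/D$ is then extracted by a Cauchy--Schwarz estimate: since $\|x^*-x_k\|_2 \leq D$, one has $\|\nabla f(x_k)\|_2 \geq \bigl(-\nabla f(x_k)^\top(x^*-x_k)\bigr)/D$, and substituting this into the previous display yields
\[
-\nabla f(x_k)^\top d_k \;\geq\; \frac{\delta}{D}\,\bigl(-\nabla f(x_k)^\top(x^*-x_k)\bigr).
\]
To finally match the shape of \eqref{scaling_condition_determined}, I would compare the two Bregman divergences through their extremes: from the uniform bound $V(s_k,x_k)\leq D_V$ together with the standing inequality $V(x^*,x_k)\geq \epsilon$, one obtains $1/V(s_k,x_k)\geq (\epsilon/D_V)\cdot 1/V(x^*,x_k)$, and multiplying the previous chain by this factor reproduces precisely \eqref{scaling_condition_determined}.

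The main obstacle I foresee is conceptual rather than technical: the appearance of $\epsilon$ on the right-hand side is justified only while $V(x^*,x_k)\geq \epsilon$, so $\epsilon$ has to be read as the target Bregman residual playing the role of a stopping criterion, beyond which the iterate is already satisfactory and the scaling condition is no longer needed for the convergence analysis. Once this is agreed, the geometric part of the argument is essentially forced: the factor $\delta/D$ quantifies the room the LMO has around $x^*$ along the direction $-\nabla f(x_k)$, while the factor $\epsilon/D_V$ comes purely from the two-sided bounds on the Bregman divergence, with no further dependence on the structure of $f$ or on the step size $\alpha_k$.
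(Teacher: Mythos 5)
Your proposal is correct and follows essentially the same route as the paper's proof: the same auxiliary feasible point $x^* + \delta\,(-\nabla f(x_k))/\|\nabla f(x_k)\|_2$ combined with LMO optimality to produce the $\delta\|\nabla f(x_k)\|_2$ margin, the same Cauchy--Schwarz step yielding the factor $\delta/D$, and the same two-sided Bregman bounds $V(s_k,x_k)\leq D_V$ and $V(x^*,x_k)\geq\epsilon$ for the factor $\epsilon/D_V$. Your reading of $\epsilon$ as a stopping threshold, with the inequality needed only while $V(x^*,x_k)\geq\epsilon$, matches exactly the paper's handling of the remaining case.
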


\begin{proof}
	Let $g = -\nabla f(x_k)$ and $\hat{g} = \frac{g}{\| g \|_2}$. Since $x^* \in \text{Int}(Q)$, we have $x^* + \delta \hat{g} \in Q$. Therefore,
	\begin{equation}\label{g_norm_ineq}
		\begin{aligned}
			g^{\top} d_k &\geq g^{\top}\left(\left(x^*+\delta \widehat{g}\right)-x_k\right) \\
			&= \delta g^{\top} \widehat{g}+g^{\top}\left(x^*-x_k\right) \\
			&\geq \delta\|g\|_2+f(x_k)-f^* \geq \delta\|g\|_2,
		\end{aligned}
	\end{equation}
	where the first inequality follows from the fact that $x^* + \delta \hat{g} \in Q$, and the second uses the convexity of $f$.
	
	Next, if $V(x^*, x_k) > \epsilon$, then
	\begin{equation*}
		\begin{aligned}
			& g^{\top} \frac{d_k}{V(s_k, x_k)} \geq g^{\top} \frac{d_k}{D_V} \overset{\textcircled{1}}{\geq} \frac{\delta\|g\|}{D_V} \geq \frac{\delta}{D_V} \cdot g^{\top}\left(\frac{x^*-x_k}{\left\|x^*-x_k\right\|}\right) \cdot \frac{V\left(x^*, x_k\right)}{V\left(x^*, x_k\right)} \\
			&= \frac{\delta}{D_V} \cdot \frac{V\left(x^*, x_k\right)}{\left\|x^*-x_k\right\|} \cdot g^{\top}\left(\frac{x^*-x_k}{V\left(x^*, x_k\right)}\right) \\
			&\geq \frac{\delta}{D} \cdot \frac{V\left(x^*, x_k\right)}{D_V} \cdot g^{\top}\left(\frac{x^*-x_k}{V\left(x^*, x_k\right)}\right) \overset{\textcircled{2}}{\geq} \frac{\delta}{D} \cdot \frac{\varepsilon}{D_V} \cdot g^{\top}\left(\frac{x^*-x_k}{V\left(x^*, x_k\right)}\right),
		\end{aligned}
	\end{equation*}
	where step~\textcircled{1} follows from inequality~\eqref{g_norm_ineq}, and step~\textcircled{2} uses the assumption $V(x^*, x_k) > \epsilon$. In the remaining case, when $V(x^*, x_k) \leq \epsilon$, the desired accuracy has already been achieved.
	
	This concludes the proof.
\end{proof}

Now we prove the linear convergence rate of the Algorithm~\ref{alg:exp_adapt_simplified}.

\begin{theorem}\label{strong_conv_theorem}
	Suppose $f$ is a relatively strongly convex function \eqref{rel_strong_conv}, satisfies relatively smoothness \eqref{def:rs} with the scaling condition property \eqref{scaling_cond_def}. Then, for Algorithm \ref{alg:exp_adapt_simplified}, the following holds:
	\begin{equation*}
		\begin{aligned}
			& f\left(x_k\right)-f^* \leqslant\left(f\left(x_0\right)-f^*\right) \left(\frac{1}{2}\right)^t \cdot \left( 1 - \frac{\gamma_{\min} ^{\gamma_{\min} / (\gamma_{\min} - 1)} }{\gamma_{\min} + 1} \left( \frac{\tau \mu}{2 L_{\max}} \right) ^{1 / (\gamma_{\min}-1)} \right)^{k - t}, \\
		\end{aligned}
	\end{equation*}
	where $\tau > 0$, $t$ is number of times when step size $\alpha_k = 1$, $L_{\max} = \max\limits_{i=0,\dots,k} L_i$ and $\gamma_{\min} = \min\limits_{i=0,\dots,k} \gamma_i$.
\end{theorem}

\begin{proof}
	Consider \eqref{descent_lemma} at iteration $k$:
	\begin{equation*}
		f(x_{k+1}) - f(x_k) \leq \nabla f(x_k)^\top d_k \cdot \min \left\{ \frac{1}{2}, \ \frac{1}{2} \left( \frac{- \nabla f(x_k)^\top d_k}{ 2 L_{\max} V(s_k, x_k) } \right)^{1/(\gamma_{\min} - 1)} \right\},
	\end{equation*}
	Adding and subtracting $f^*$ on the left-hand side, regrouping terms, and denoting $h_k = f(x_k) - f^*$, we obtain
	\begin{align}\label{eq:h_decrease}
		h_{k+1} &\leq h_k \left(1 - \min \left\{ \frac{1}{2}, \frac{1}{2} \left( \frac{- \nabla f(x_k)^\top d_k}{2 L_\text{max} V(s_k, x_k)} \right)^{1/ (\gamma_{\min} - 1)} \right\} \right) \notag \\
		&\leq h_k \left(1 - \min \left\{ \frac{1}{2}, \frac{1}{2} \left( \frac{\tau}{2 L_\text{max}} (- \nabla f(x_k))^\top \left( \frac{x^* - x_k}{V(x^*, x_k)} \right) \right)^{1/ (\gamma_{\min} - 1)} \right\} \right)
	\end{align}
	
	In the last inequality, we used the scaling condition \eqref{scaling_cond_def}.
	
	Next, we apply the relatively strong convexity property \eqref{rel_strong_conv} to derive a lower bound on $f^* - f(x_k)$:
	\begin{equation*}
		\begin{aligned}
			& f^* - f(x_k) \geq \nabla f(x_k)^\top(x^* - x_k) + \mu V(x^*, x_k) \geq \\ 
			& \geq \min_{\alpha_*} \left\{ \alpha_* \nabla f(x_k)^\top (x^* - x_k) + \alpha_*^{\gamma_{\min}} \mu V(x^*, x_k) \right\}
		\end{aligned}
	\end{equation*}
	
	We now determine $\alpha_*$ such that
	\begin{equation} \label{strong_conv_with_alpha}
		\begin{aligned}
			\nabla f(x_k)^\top(x^* - x_k) + \gamma_{\min} \alpha_*^{\gamma_{\min} - 1} \mu V(x^*, x_k) = 0
		\end{aligned}
	\end{equation}
	which yields
	$$\alpha_*^{\gamma_{\min} - 1} = - \frac{-\nabla f(x_k)^\top (x^* - x_k)}{\gamma_{\min} \mu V(x^*, x_k)}$$
	
	Substituting this into \eqref{strong_conv_with_alpha}, we obtain
	
	\begin{equation} \label{precision_strong_conv}
		\begin{aligned}
			& f^* - f(x_k) \geq \left( - \frac{\nabla f(x_k)^\top (x^* - x_k)}{\gamma_{\min} \mu V(x^*, x_k)} \right)^{1 / (\gamma_{\min} - 1)} \nabla f(x_k)^\top(x^* - x_k) + \\ 
			& + \left( - \frac{\nabla f(x_k)^\top (x^* - x_k)}{\gamma_{\min} \mu V(x^*, x_k)} \right)^{\gamma_{\min} / (\gamma_{\min} - 1)} \mu V(x^*, x_k) \geq \\
			& \geq \frac{1 - \gamma_{\min}}{\gamma_{\min}^{\gamma_{\min} / (\gamma_{\min} - 1)}} \cdot \frac{(- \nabla f(x_k)^\top(x^* - x_k))^{\gamma_{\min} / (\gamma_{\min} - 1)}}{(\mu V(x^*, x_k))^{1 / (\gamma_{\min} - 1)}}
		\end{aligned}
	\end{equation}
	
	Substituting the above estimate into \eqref{eq:h_decrease}, we get
	
	\begin{equation} \label{precision_strong_conv}
		\begin{aligned}
			& h_{k+1} \leq h_k \left( 1 - \min \left\{ \frac{1}{2}, \frac{1}{2} \left( \frac{\tau \mu}{2 L_k} \right)^{1/(\gamma_{\min} - 1)} \cdot \frac{\gamma_{\min}^{\gamma_{\min} / (\gamma_{\min} - 1)}}{(1 + \gamma_{\min})} \cdot \frac{- h_k}{\nabla f(x_k)^\top(x^* - x_k)} \right\} \right) \leq \\
			& \leq h_k \left( 1 - \min \left\{ \frac{1}{2}, \frac{1}{2} \left( \frac{\tau \mu}{2 L_k} \right)^{1/(\gamma_{\min} - 1)} \cdot \frac{ \gamma_{\min}^{\gamma_{\min}/(\gamma_{\min} - 1)}}{(\gamma_{\min} + 1)} \right\} \right)
		\end{aligned}
	\end{equation}
	
	where in the last step we used \eqref{fw_gap_ineq}, which allowed us to cancel the terms $\nabla f(x_k)^\top(x^* - x_k)$ and $h_k$.
	
	This concludes the proof.
\end{proof}

Note that if the step size at iteration $k$ is full, i.e., $\alpha_k = 1$, then by Theorem~\ref{strong_conv_theorem} it follows that $f(x_{k+1}) - f^* \leq \frac{f(x_k) - f^*}{2}$; that is, the residual is halved.

Now, combining Lemma~\eqref{scaling_condition_determined} with Theorem~\ref{strong_conv_theorem}, we immediately obtain the following:

\begin{corollary}\label{strong_conv_theorem_determined}
	Suppose $f$ is a relatively strongly convex function \eqref{rel_strong_conv}, satisfies relatively smoothness \eqref{def:rs} with the scaling condition property \eqref{scaling_cond_def}. Then for Algorithm \ref{alg:exp_adapt_simplified}, the following holds:
	\begin{equation*}
		\begin{aligned}
			& f\left(x_k\right)-f^* \leqslant\left(f\left(x_0\right)-f^*\right) \left(\frac{1}{2}\right)^t \left( 1 - \frac{\gamma_{\min} ^{\gamma_{\min} / (\gamma_{\min} - 1)} }{\gamma_{\min} + 1} \left( \frac{\delta}{2 D} \frac{\epsilon}{D_V} \frac{\mu}{2 L_{\max}} \right) ^{1 / (\gamma_{\min}-1)} \right)^{k - t}, \\
		\end{aligned}
	\end{equation*}
	where $\epsilon$ is the desired residual, $\partial Q$ is a boundary of the feasible set, $\delta = \text{dist}(x^*, \partial Q) = \inf_{y \in \partial Q} \| x^* - y \|_2$, $D = \max_{x,y} \| x - y \|_2$, and $D_V = \max_{x,y} V(x, y)$ for $x, y \in Q$.
\end{corollary}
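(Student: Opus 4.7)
The plan is to combine the two previously established results directly; no new analysis is required. The corollary is simply a specialization of Theorem~\ref{strong_conv_theorem} obtained by plugging in the explicit value of the scaling constant $\tau$ from the lemma leading to \eqref{scaling_condition_determined}.

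First, I would invoke Theorem~\ref{strong_conv_theorem} as a black box, which yields the product form $f(x_k)-f^* \leq (f(x_0)-f^*)\prod_{i=1}^{k}\varphi_i$ with the two-branch definition of $\varphi_i$ written in terms of an arbitrary constant $\tau > 0$ witnessing the scaling condition \eqref{scaling_cond_def}. Second, I would recall from the lemma proving \eqref{scaling_condition_determined} that, whenever $x^{*}$ lies in the Euclidean interior of $Q$ and $V(x^{*},x_k)>\varepsilon$, the scaling condition is satisfied automatically with the concrete constant $\tau = \tfrac{\delta}{D}\cdot\tfrac{\varepsilon}{D_V}$. Third, I would substitute this value into the $\alpha_i<1$ branch of $\varphi_i$: the prefactor $\tau/2$ becomes $\tfrac{\delta}{2D}\cdot\tfrac{\varepsilon}{D_V}$, while the remaining factor $\tfrac{\gamma^{\gamma/(\gamma+1)}}{\gamma+1}\bigl(\tfrac{\mu}{2L_i}\bigr)^{1/(\gamma-1)}$ carries over unchanged. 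The $\alpha_i=1$ branch is also unaffected, since its rate $\tfrac{1}{2}$ does not depend on $\tau$ at all. This produces exactly the expression asserted in the corollary.

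There is no real obstacle here; both ingredients are already proved, and the argument reduces to a single substitution together with the verification that the hypotheses of the lemma (convexity of $f$, interiority of $x^{*}$) and those of Theorem~\ref{strong_conv_theorem} (relative smoothness and relative strong convexity) are simultaneously in force under the assumptions of the corollary, which is immediate from the statement. The only point that merits an explicit comment is the interpretation of the bound when $V(x^{*},x_k)$ drops below $\varepsilon$: in that regime the derived value of $\tau$ is no longer guaranteed by the lemma, but this is harmless because the algorithm has then already reached the target accuracy $\varepsilon$, so no further rate estimate is needed. Accordingly, the product estimate is to be read as valid for all iterations $i$ up to the first one at which this accuracy is attained.
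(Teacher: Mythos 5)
Your proposal is correct and matches the paper's own proof, which is exactly the one-line substitution of $\tau = \frac{\delta}{D}\cdot\frac{\epsilon}{D_V}$ from the lemma establishing \eqref{scaling_condition_determined} into Theorem~\ref{strong_conv_theorem}. Your additional remark on the regime $V(x^*,x_k)\le\epsilon$ (where the lemma's value of $\tau$ is no longer guaranteed but the target accuracy is already reached) is a caveat the paper's proof omits, and it is worth keeping.
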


\begin{proof}
	Substitute the value of $\tau$ from Lemma~\ref{scaling_condition_determined} into Theorem~\ref{strong_conv_theorem}.
\end{proof}

\section{Application to Distributed Optimization}

We now consider the distributed optimization problem \eqref{distr_opt_problem} and demonstrate how relative strong convexity \eqref{rel_strong_conv} and relative smoothness \eqref{def:rs} can be applied in the centralized distributed optimization setting. We show that if the objective function satisfies \eqref{rel_strong_conv}, then Algorithm~\ref{alg:exp_adapt_simplified} exhibits acceleration.

The key idea was originally proposed in \cite{hendrikx2020statistically} for gradient methods. We extend this approach to the Frank-Wolfe method.

Assume that all functions \( f_j \) are \( L_\text{euk} \)-smooth and \( \mu_\text{euk} \)-strongly convex in the standard Euclidean sense. Then the aggregate function \( F \) is also \( L_\text{euk} \)-smooth and \( \mu_\text{euk} \)-strongly convex, with \( L_\text{euk}, \mu_\text{euk} > 0 \). 

Additionally, assume the \emph{statistical similarity} property holds, meaning there exists a constant \( \sigma > 0 \) with \( \sigma \ll L_\text{euk} \) such that
\begin{equation}
	\|\nabla F(x) - \nabla \tilde{F}(x) - \nabla F(y) + \nabla \tilde{F}(y)\|^2 \leq \sigma \|x - y\|^2, \quad \forall x, y.
	\label{eq:sigma_assumption}
\end{equation}

Then for any \( x, y \in Q\),
\[
F(y) - \tilde{F}(y) \leq F(x) - \tilde{F}(x) + \langle \nabla F(x) - \nabla \tilde{F}(x), y - x \rangle + \frac{\sigma}{2} \|y - x\|^2,
\]
i.e.,
\[
F(y) \leq F(x) + \langle \nabla F(x), y - x \rangle + \tilde{F}(y) - \tilde{F}(x) - \langle \nabla \tilde{F}(x), y - x \rangle + \frac{\sigma}{2} \|y - x\|^2.
\]
This means that
\begin{equation}
	F(y) \leq F(x) + \langle \nabla F(x), y - x \rangle + V_{\tilde{F}}(y, x) \quad \forall x, y,
	\label{eq:vtilde_def}
\end{equation}
where
\[
V_{\tilde{F}}(y, x) = d_{\tilde{F}}(y) - d_{\tilde{F}}(x) - \langle \nabla d_{\tilde{F}}(x), y - x \rangle, \quad
d_{\tilde{F}}(x) = \tilde{F}(x) + \frac{\sigma}{2} \|x\|^2.
\]

On the other hand, from~\eqref{eq:sigma_assumption} it follows that
\[
F(y) - \tilde{F}(y) \geq F(x) - \tilde{F}(x) + \langle \nabla F(x) - \nabla \tilde{F}(x), y - x \rangle - \frac{\sigma}{2} \|y - x\|^2,
\]
\[
F(y) \geq F(x) + \langle \nabla F(x), y - x \rangle + \tilde{F}(y) - \tilde{F}(x) - \langle \nabla \tilde{F}(x), y - x \rangle - \frac{\sigma}{2} \|y - x\|^2.
\]

Moreover, due to the \( \mu_\text{euk} \)-strong convexity of \( F \), we have
\[
F(y) \geq F(x) + \langle \nabla F(x), y - x \rangle + \frac{\mu_\text{euk}}{2} \|y - x\|^2,
\]
from which for any \( C \in (0, 1) \)
\[
F(y) - F(x) - \langle \nabla F(x), y - x \rangle \geq C \left[ \tilde{F}(y) - \tilde{F}(x) - \langle \nabla \tilde{F}(x), y - x \rangle \right] + \frac{\mu_\text{euk}(1 - C) - C\sigma}{2} \|y - x\|^2.
\]

For \( C = \frac{\mu_\text{euk}}{\mu_\text{euk} + 2\sigma} \), we obtain \( \forall x, y \)
\[
F(y) \geq F(x) + \langle \nabla F(x), y - x \rangle + \frac{\mu_\text{euk}}{\mu_\text{euk} + 2\sigma} V_{\tilde{F}}(y, x).
\]

\textbf{Conclusion:} The assumption \eqref{eq:sigma_assumption} implies that \( F \) is 1-relatively smooth and \( \frac{\mu_\text{euk}}{\mu_\text{euk} + 2\sigma} \)-relatively strongly convex with respect to the Bregman divergence \( V_{\tilde{F}}(y, x) \). Therefore, we can apply Algorithm~\ref{alg:exp_adapt_simplified_distr} with the adaptive step size

\begin{equation}
	\alpha_k := \min \left\{ \left( \frac{- \langle \nabla f(x_k), d_k \rangle }{2 L V_{\tilde{F}}(s_k, x_k)} \right) ^{\frac{1}{\gamma_k - 1}}, 1 \right\}.
\end{equation}

In this setting, the \emph{relative condition number} becomes
\begin{equation}\label{rs_cond_nmbr}
	\frac{1}{\frac{\mu_\text{euk}}{\mu_\text{euk} + 2\sigma}} = 1 + \frac{2\sigma}{\mu_\text{euk}} \quad \left(\ll \frac{L_\text{euk}}{\mu_\text{euk}} \right).
\end{equation}

\begin{algorithm}  
	\caption{Frank-Wolfe Method with Adaptive \( \gamma \)}  
	\label{alg:exp_adapt_simplified_distr}  
	\begin{algorithmic}[1]  
		\Require Initial point \( x_0 \in \operatorname{rint} Q \), \( \gamma_0 = 2 \), \( L > 0 \), \( \delta > 0 \), \( \eta > 1 \)
		\State Initialize: \( x_0 \), \( \gamma_{-1} = \gamma_0 \), \( L_{-1} = L_0 \)  
		\For{\( k = 0, 1, 2, \dots \)}  
		\State \( \gamma_k \gets \min \{ \gamma_k + \eta (\gamma_k - 1), \gamma_{\max} \} \)
		\While{true}
		\State \( s_k \in \text{LMO}_Q (\nabla f(x_k)) \)  
		\State \( d_k = s_k - x_k \)  
		\State \( \alpha_k := \min \left\{ \left( \frac{- \langle \nabla f(x_k), d_k \rangle }{2 L V(s_k, x_k)} \right) ^{\frac{1}{\gamma_k - 1}}, 1 \right\} \)  
		\If{\( V(x_k + \alpha_k d_k) \leq \alpha_k^{\gamma_k} V(s_k, x_k) \)}  \label{alg:exp_adapt_simplified_distr:cond}
		\State \textbf{break}
		\Else  
		\State \( \gamma_k \gets 1 + \frac{\gamma_k - 1}{\eta} \)
		\EndIf  
		\EndWhile
		\State \( x_{k+1} \gets x_k + \alpha_k d_k \)
		\EndFor  
	\end{algorithmic}  
\end{algorithm}

This directly leads to acceleration guaranteed by Theorem~\ref{strong_conv_theorem}, since the convergence rate depends on the relative condition number \eqref{rs_cond_nmbr}, which is significantly smaller than the standard Euclidean counterpart.

Algorithm~\ref{alg:exp_adapt_simplified_distr} adapts only the parameter \( \gamma \), while \( L \) is kept fixed. Note that the adaptation condition~\ref{alg:exp_adapt_simplified_distr:cond} differs from the one used in Algorithm~\ref{alg:exp_adapt_simplified:cond}. This discrepancy arises because the exact function value is not available in the distributed optimization setting.

It is also worth noting that relative smoothness often yields smaller values of the smoothness constant \( L \) compared to the Euclidean case. However, it may come with a larger diameter of the feasible set, which can negatively impact the convergence rate of the Frank-Wolfe algorithm (see the bound in \eqref{theorem:alg_exp_adapt}). As shown in Corollary  \ref{strong_conv_theorem_determined}, this drawback can be mitigated by the favorable denominator \eqref{rs_cond_nmbr}, leading to improved practical performance.

\section{Numerical experiments}

All experiments were conducted using Python 3.11.9 on a computer equipped with an AMD Ryzen 5 5600H 3.30 GHz CPU.  

We performed experiments on relatively smooth applications, specifically the D-optimal experiment design problem (for more details on this problem in the context of relative smoothness, see \cite{lu2018relatively}) and the Poisson linear inverse problem (see \cite{csiszar1991least}). 

We investigate in experiments the next question: does adaptable algorithms outperforms the analogues with fixed parameters? Does adaptable for two parameters algorithm outperforms algorithm adaptable for one parameter?

Also, we measure CPU time consumption, defined as \( t_k - t_0 \), where \( t_0 \) is the starting time of the algorithm and \( t_k \) is the time at iteration \( k \).  

%
%
%
%
%

\subsection{D-Optimal Experiment Design}\label{d_optimal_experiment_design}

D-optimal experiment design aims to select experimental conditions that maximize the determinant of the Fisher information matrix, thereby minimizing the volume of the confidence ellipsoid for the estimated parameters. This approach identifies a subset of experimental conditions that provides the most informative data for parameter estimation in a statistical model.  

It was shown in \cite{lu2018relatively} that the D-optimal experiment design objective is 1-smooth relative to Burg's entropy, defined as  
\[
h(x) = - \sum_{i=1}^{n} \log(x^{(i)})
\]
on \( \mathbb{R}_+^n \).  

We now introduce the objective function and parameters used in our experiments. Given \( n \) vectors \( v_1, \dots, v_n \in \mathbb{R}^m \), where \( n \geq m+1 \), the optimization problem is formulated as follows:  

\begin{equation}\label{objective_exp}
	\begin{aligned} 
		\text{minimize} \quad & f(x) := -\log \det \left( \sum_{i=1}^{n} x^{(i)} v_i v_i^T \right) \\  
		\text{subject to} \quad & \sum_{i=1}^{n} x^{(i)} = 1, \\  
		& x^{(i)} \geq 0, \quad i = 1, \dots, n.
	\end{aligned}
\end{equation}

The initial point is set as \( x_0 = (1/n, \dots, 1/n) \) with \( L=1 \). As mentioned above, Burg's entropy is used as the reference function for the divergence. The vectors \( v_i \) were generated independently from a Gaussian distribution with zero mean and unit variance in \( \mathbb{R}^m \).

The results are presented in Figure~\ref{fig:d_opt_design1} and Figure~\ref{fig:d_opt_design2}. FW-adapt-all (adaptive with respect to both parameters) consistently outperforms FW-adapt-L (adaptive only with respect to $L$), while FW-adapt-L, in turn, consistently outperforms FW-Bregman. Moreover, the more frequently the parameters are adjusted, the more pronounced this difference becomes. This suggests that adaptivity indeed provides acceleration. However, it should be noted that parameter adjustment requires some computational overhead, which is visible in the right-hand side of the plots. It is also worth mentioning that the acceleration effect can be further improved by tuning the adaptation rules, for example, by adjusting the divisor for the parameters $L$ and $\gamma$ at each iteration.



\begin{figure}
	\includegraphics[width=\textwidth]{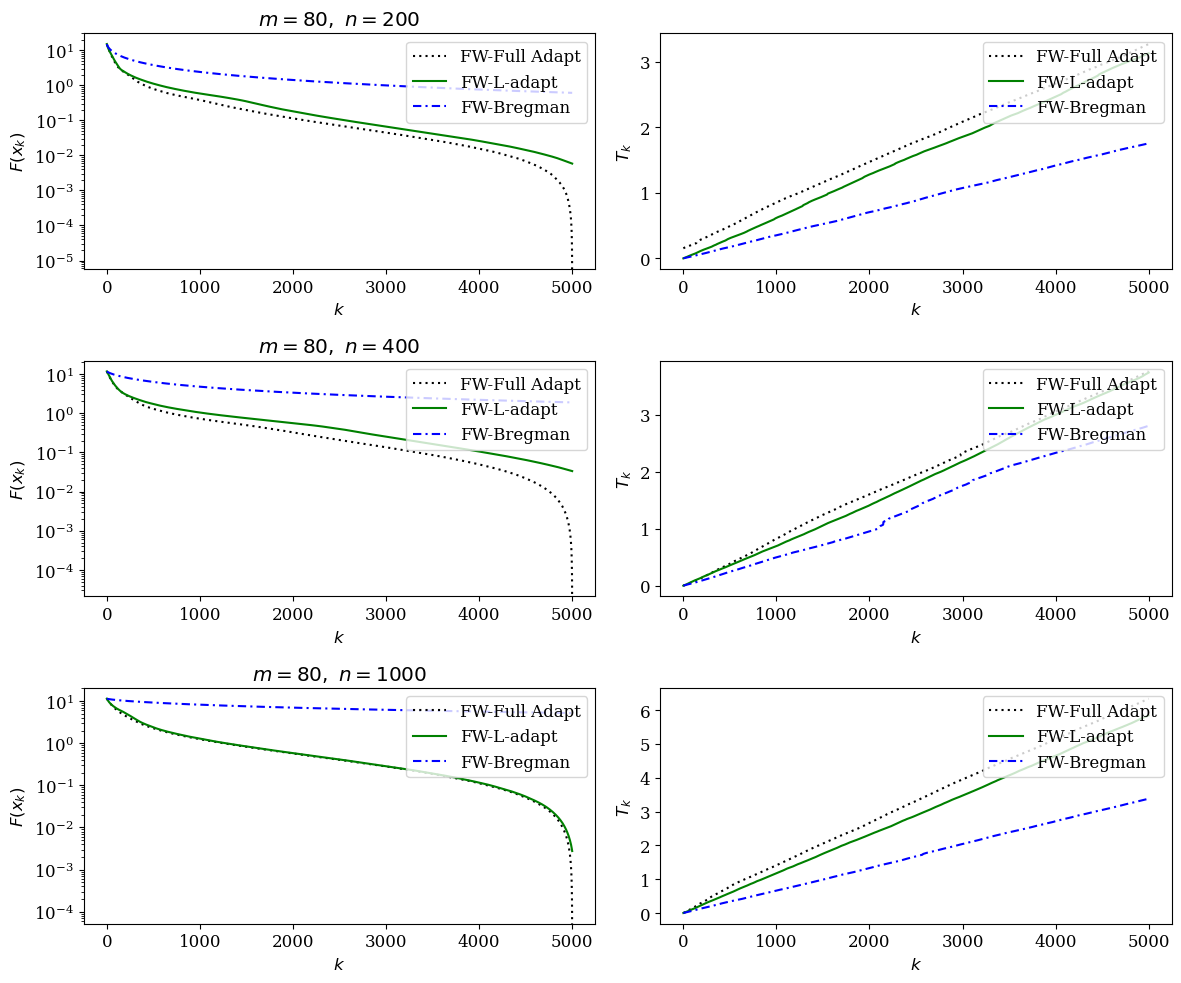}
	\caption{D-optimal experiment design \eqref{objective_exp}, changing the parameter $n$. The left graph shows the convergence rate on a logarithmic scale, while the right graph displays CPU time on a linear scale. FW-Full Adapt corresponds to the step-size \eqref{step_size_l_smooth} adaptable by $L, \gamma$ parameters. FW-adapt refers to step-size \eqref{step_size_l_smooth} adaptable by $L$ parameter and FW-Bregman is step-size \eqref{step_size_l_smooth} with fixed parameters.} \label{fig:d_opt_design1}
\end{figure}

\begin{figure}
	\includegraphics[width=\textwidth]{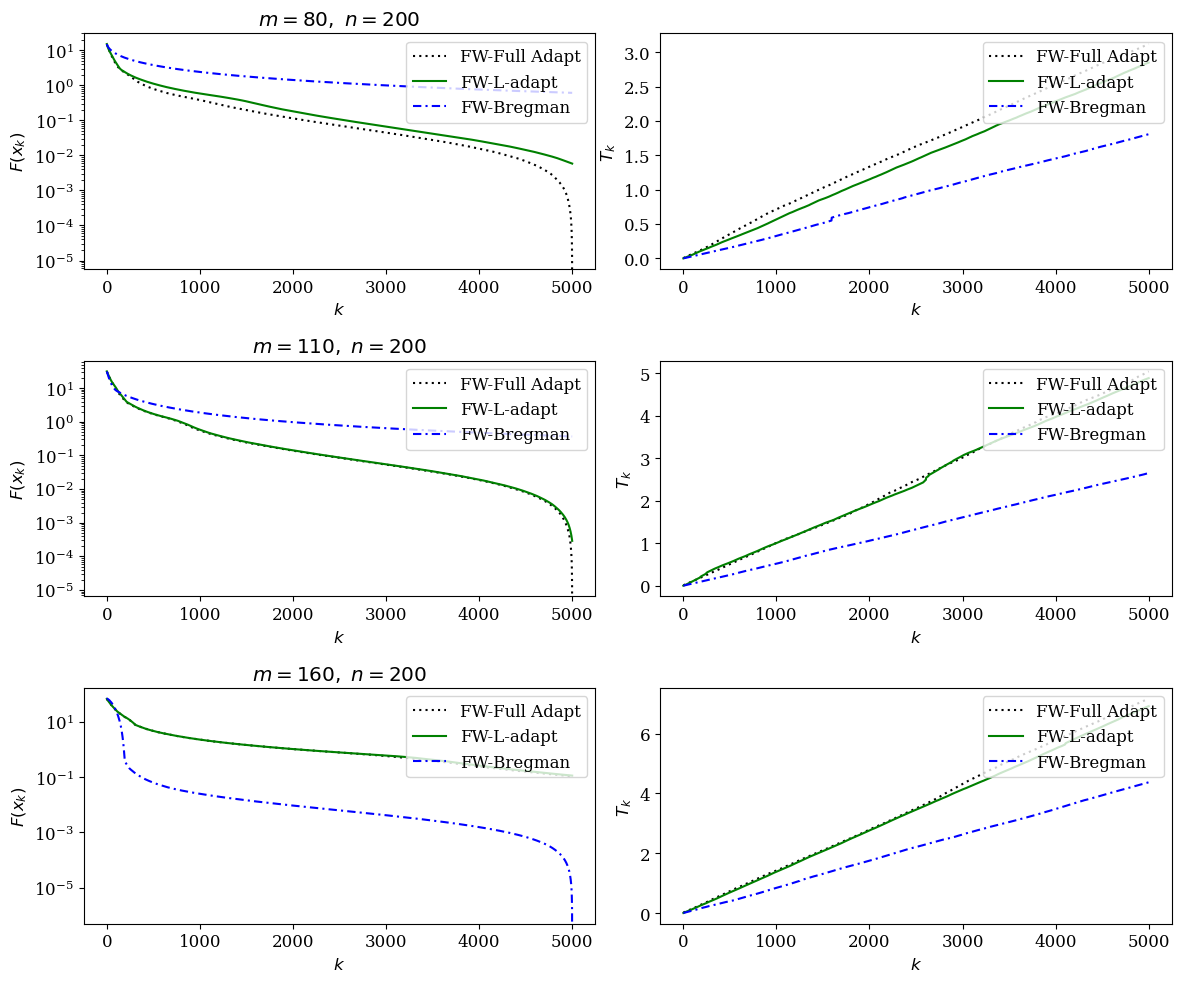}
	\caption{D-optimal experiment design \eqref{objective_exp}, changing the parameter $m$. The left graph shows the convergence rate on a logarithmic scale, while the right graph displays CPU time on a linear scale. FW-Full Adapt corresponds to the step-size \eqref{step_size_l_smooth} adaptable by $L, \gamma$ parameters. FW-adapt refers to step-size \eqref{step_size_l_smooth} adaptable by $L$ parameter and FW-Bregman is step-size \eqref{step_size_l_smooth} with fixed parameters.} \label{fig:d_opt_design2}
\end{figure}

\subsection{Poisson Linear Inverse Problem}
The Poisson linear inverse problem arises in scenarios where observed data follows a Poisson distribution, which is common in counting processes, imaging, and scientific applications such as tomography and photon-limited imaging. The problem is formulated as:

\begin{equation}\label{poisson_linear_inv_problem}
	\begin{aligned}
		\text{minimize} \quad & f(x) :=  \sum_{i=1}^{m} \left(x^{(i)} \log\left(\frac{x^{(i)}}{y^{(i)}} \right) - x^{(i)} + y^{(i)} \right) \\  
		\text{subject to} \quad & \sum_{i=1}^{n} x^{(i)} = 1, \\  
		& x^{(i)} \geq 0, \quad i = 1, \dots, n.
	\end{aligned}
\end{equation}  

Here, \( y \in \mathbb{R}^m_{++} \) represents the observed counts, \( A \in \mathbb{R}^{m \times n}_+ \) is the measurement matrix, and \( x \in \mathbb{R}^n_+ \) is the unknown parameter (e.g., an image or signal). The elements of \( A \) and \( y \) are sampled from a uniform distribution over the interval \( [0, 1] \), with dimensions set to \( m=500 \) and \( n=200 \).  

The objective function \eqref{poisson_linear_inv_problem} is L-relative smooth with respect to \( h(x) = - \sum_{i=1}^{n} \log (x^{{i}}) \) over \( \mathbb{R}^n_{++} \) (see \cite{bauschke2017descent}), with a smoothness constant \( L = \| y \|_1 \). Therefore, we employ the same divergence function as in the previous experiment, defined in \eqref{objective_exp}.

As shown in Figure~\ref{fig:poisson_regr}, we observe almost the same results as in the D-optimal design experiment. The experiment clearly demonstrates that the algorithm adaptive with respect to both $\gamma$ and $L$ consistently achieves faster convergence. The algorithm adaptive with respect to both $\gamma$ and $L$ outperforms the one adaptive only with respect to $L$, which in turn outperforms the non-adaptive version. However, due to the overhead of parameter tuning, these adaptive methods are somewhat slower at the beginning, which is expected since they need time to adjust to the appropriate constants.

\begin{figure}
	\includegraphics[width=\textwidth]{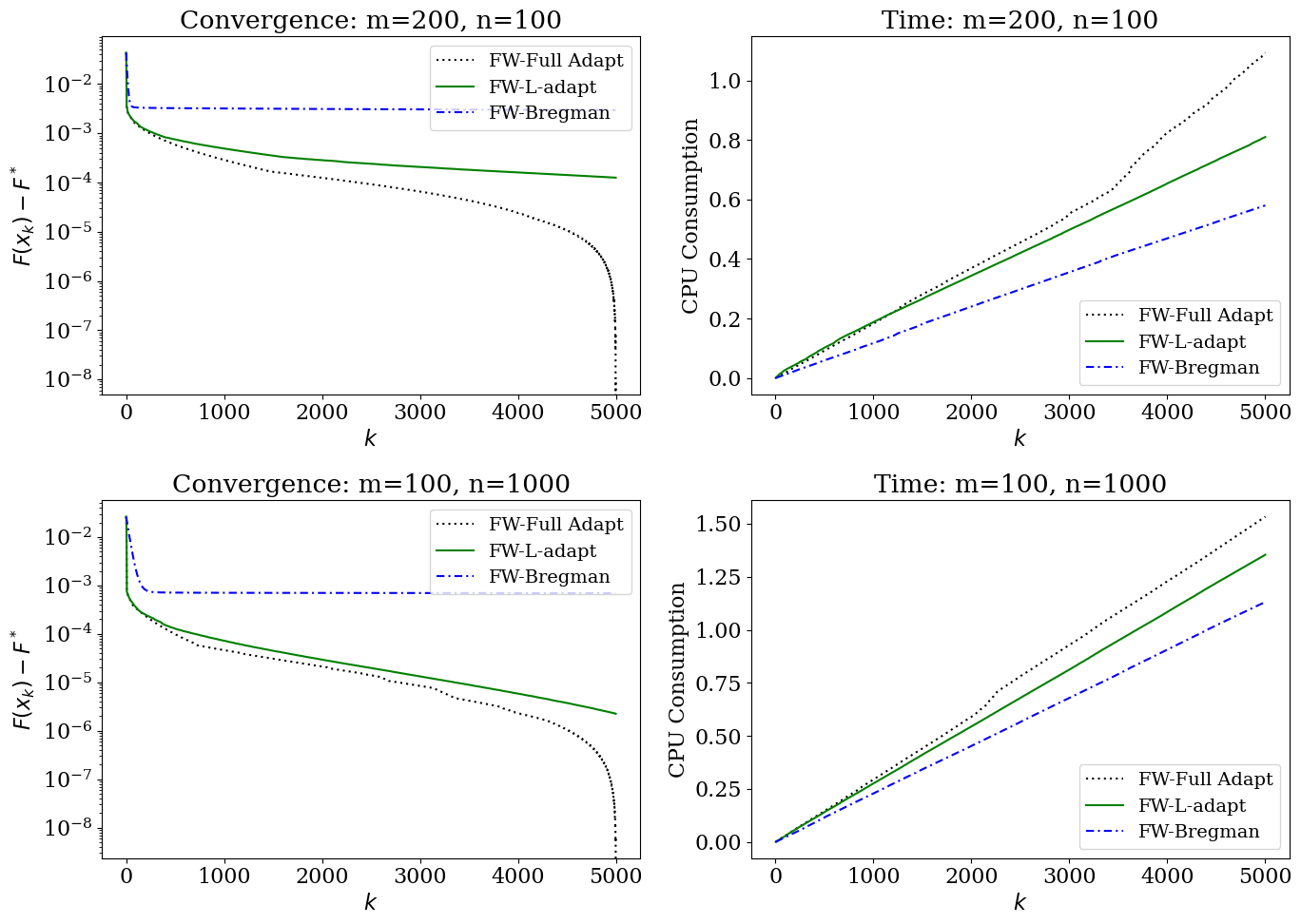}
	\caption{Poisson linear inverse problem \eqref{poisson_linear_inv_problem}. The left graph shows the convergence rate on a logarithmic scale, while the right graph displays CPU time on a linear scale. FW-Full Adapt corresponds to the step-size \eqref{step_size_l_smooth} adaptable by $L, \gamma$ parameters. FW-adapt refers to step-size \eqref{step_size_l_smooth} adaptable by $L$ parameter and FW-Bregman is step-size \eqref{step_size_l_smooth} with fixed parameters.}  
	\label{fig:poisson_regr}
\end{figure}

\section{Conclusion}

In this work, we proposed a fully adaptive variant of the Frank-Wolfe method for optimization problems under relative smoothness. Our approach extends existing adaptive strategies by simultaneously adapting to both the relative smoothness constant \( L \) and the Triangle Scaling Exponent \( \gamma \), without requiring prior knowledge of these parameters.

We also showed how relative smoothness and relative strong convexity naturally emerge in centralized distributed optimization problems. Under a mild variance-type assumption on the gradients, the objective function becomes relatively strongly convex with respect to a local Bregman divergence. This structure allows the use of our adaptive Frank-Wolfe algorithm, which benefits from acceleration due to a significantly improved relative condition number.

\section*{Acknowledgments}
The research was supported by the Russian Science Foundation (project No. 21-71-30005-/pi), \url{https://rscf.ru/en/project/21-71-30005/}.

%
%
%
\bibliographystyle{splncs04}
\bibliography{FW_rs_distributed_and_full_adaptive}

\end{document}